\newtheorem{theorem}{Theorem}[section]
\newtheorem{proposition}{Proposition}[section]
\newtheorem{definition}{Definition}[section]
\begin{document}

\begin{flushleft}
\end{flushleft}
\setcounter{page}{1}
\vspace{2cm}
\author[]{Subarsha Banerjee }
\title[\centerline{Subarsha Banerjee: The Szeged Index of Power Graph of  Finite Groups
\hspace{0.5cm}}]{The Szeged Index of Power Graph of  Finite Groups}

\thanks{\noindent $^1$ Department of Mathematics, JIS University, 81, Nilgunj Rd, Agarpara, Kol-700109, India \\
\indent \,\,\, e-mail: subarshabnrj@gmail.com/subarsha.banerjee@jisuniversity.ac.in; 0000-0002-7029-7650\\
}

\begin{abstract}
The Szeged index of a graph is an invariant with several applications in chemistry. 
The power graph of a finite group $G$ is a graph having vertex set as $G$ in which two vertices $u$ and $v$ are adjacent if  $v=u^m$ or $u=v^n$ for some $m,n\in \mathbb{N}$.
In this paper, we first obtain a formula for the Szeged index of the generalized join of graphs.
As an application, we obtain the Szeged index of the power graph of the finite cyclic group $\mathbb Z_n$ for any $n>2$.
We further obtain a relation between the Szeged index of the power graph of $\mathbb Z_n$ and the  Szeged index of the power graph of the dihedral group $\mathrm{D}_n$.
We also provide SAGE codes for evaluating the Szeged index of the power graph of $\mathbb{Z}_n$ and $\mathrm{D}_n$ at the end of this paper.
		
\bigskip \noindent Keywords: Szeged index, generalized join, power graph, finite cyclic group, dihedral group.

\bigskip \noindent AMS Subject Classification: 05C09, 05C25 
 
\end{abstract}
\maketitle 
\bigskip
\bigskip
%
\section{Introduction}
	Throughout the paper, $G$ will denote either a finite graph or a finite group, whichever is applicable, and it will be clear from the context.
	Also, given a set $A$, the number of elements in $A$ is denoted by $\lvert A\rvert$.
	
	In theoretical chemistry, topological indices are widely used to understand the physical and chemical properties of chemical compounds. There exist many different types of topological indices that aim to capture various aspects of the molecular graphs associated with the molecules considered.
	The oldest and most thoroughly examined topological index is the Wiener index (see \cite{gutman1993wiener,dobrynin2001wiener,knor2016wiener} for some references).
	Another topological index, the Szeged index, was introduced much later and extensively studied.
	We discuss the Szeged index and its close association with the  Wiener index in short below.
	Suppose  $G$ is a finite, simple, and connected graph with vertex set $V(G)$ and edge set $E(G)$. 	The number of vertices of $G$ is denoted by $\lvert G\rvert $.
	If two vertices $a$ and $b$ are adjacent to each other, we denote it by $a\sim b$.
	Moreover, $E(G)=\{(a,b): a,b\in V(G) \text{ with } a\sim b\}$.
	The distance between two vertices $a,b\in G$, denoted by $d(a,b)$, is defined to be the length of the shortest path from $a$ to $b$. The \textit{Wiener index} \cite{klavvzar1996szeged} of a connected graph $G$  is defined as follows:
	\begin{flalign}
	\label{W}
	W(G)=\frac{1}{2}\sum_{a,b\in V(G)} d(a,b).
	\end{flalign}
	Suppose $e \in E(G)$ is an edge between the vertices $a$ and $b$ of $G$. 
	We first state the following definitions:
	We define \begin{align}
	\label{S}
	\begin{split}
	n_1(ab|G)=\lvert\{x\in G: d(x,a)<d(x,b)\}\rvert\\
	n_2(ab|G)=\lvert\{x\in G: d(x,b)<d(x,a)\}\rvert.
	\end{split}
	\end{align}
	The two quantities described in \Cref{S} were mentioned for the first time in  \cite{wiener1947structural}.
    For a long time, it was known  that the formula
	\begin{flalign}
	\label{AL}
	W(G)=\sum_{e(=ab)\in E(G)} n_1(ab|G)n_2(ab|G)
	\end{flalign}
	holds for molecular graphs of alkanes.
	In \cite{gutman2012mathematical}, it was proved that \Cref{AL} holds for all trees.
	Furthermore, in \cite{dobrynin1994graph} it was shown that \Cref{AL} does not hold in general (in particular for graphs containing cycles), and only holds for graphs whose each block is a complete graph. Although the attempts to change the right-hand side of \Cref{AL} to make it applicable to all connected graphs have been successfully made in \cite{lukovits1992correlation,lukovits1990wiener}, the resulting expressions were very confusing.
    In \cite{gutman1994formula}, it was suggested that the complications arising with the generalization of \Cref{AL} to all connected graphs containing cycles could be overcome by using the right–hand side of \Cref{AL} as the definition of a new graph invariant. Consequently, the formula was extended to all graphs and it came to be known as the Szeged index of a graph. The \textit{Szeged index} of a connected graph $G$ is defined as follows:
	\begin{flalign}
	\label{Sz}
	Sz(G)=\sum_{e(=ab)\in E(G)} n_1(ab|G)n_2(ab|G).
	\end{flalign}
	
	The Szeged index has been considered from multiple viewpoints, see for example \cite{gutman1998szeged,gutman1995szeged} and the references therein for some literature on the same. The Szeged index is closely related to the Wiener index, see for example the references \cite{bonamy2017szeged,chen2012szeged}.
	In \cite{klavvzar1996szeged}, the authors deduced the Szeged index of the Cartesian product of graphs.
	In \cite{khalifeh2008matrix}, the authors determined the Szeged index of join and composition of graphs. The Szeged index of bridge graphs was determined in \cite{mansour2009vertex}. 
    Some variants of the Szeged index namely the edge PI index, edge Szeged index,
edge-vertex Szeged index, vertex-edge Szeged index, and revised edge Szeged index 
under the rooted product of graphs have been studied in \cite{azari2016szeged}.
 Recently, the Szeged index of unicyclic graphs was studied in \cite{liu2022revised}. 
 Motivated by the above works, in this paper, we deduce the Szeged index of the generalized join of graphs.

	The \textit{power graph} \cite{chakrabarty2009undirected} of a finite group $G$  has vertex set as $G$, and two distinct vertices $u,v\in G$  are adjacent if  $u=v^m$ or $v=u^n$ for some $m,n\in \mathbb{N}$.
    Topological indices of graphs associated with algebraic structures have been studied in \cite{rehman2022mostar,abbas2021hosoya,alimon2020szeged,salman2022noncommuting,kharkongor2023topological} recently. 
    Here, in this paper, as an application of our results, we have determined the Szeged index of the power graph of certain finite groups, viz. the finite cyclic group $\mathbb{Z}_n$ and the dihedral group $\mathrm{D}_n$.
	
	The paper has been organized as follows:
	In \Cref{S1}, we describe the Szeged index of the generalized join of graphs in terms of the Szeged index of the constituent graphs. 
	In \Cref{S2}, we find the  Szeged index of the power graph of the finite cyclic group $\mathbb{Z}_n$.
	In \Cref{S3}, we find the  Szeged index of the power graph of the dihedral group $\mathrm{D}_n$ and provide a relationship between the Szeged index of the power graph of  $\mathbb{Z}_n$ and $\mathrm D_n$.
	In \Cref{S4}, we provide the SAGE codes for evaluating the Szeged index of the power graph of the finite cyclic group $\mathbb{Z}_n$, and the dihedral group $\mathrm{D}_n$.

	\section{Szeged Index of Generalized Join of Graphs }
	\label{S1}
	In this section, we deduce the Szeged index of the generalized join of graphs in terms of the constituent graphs.
	Consider a family of $n$ connected graphs $G_i$ such that  $\mid G_i \mid=n_i$ for $1\le i\le n$. Assume that  $\mathcal{G}$ is a graph such that  $\mid \mathcal{G}\mid =n$. 
	The \textit{generalized join} or the $\mathcal{G}$-join of graphs $G_i$ is defined as the following:
	
	\begin{definition}\cite [p. $15$]{schwenk1974characteristic}
		\label{generalizedcomposition}
		Suppose $\mathcal{G}$ is a graph with vertex set $V(\mathcal{G})=\{1,2,\ldots, n\}$.
		Suppose $G_i$ be disjoint connected graphs of order $n_i$  with vertex sets $V(G_i)$ for $1\le i\le n$.
		The $\mathcal{G}$-join of graphs $G_1,G_2,\dots, G_n$ denoted by $G=\mathcal{G}[G_1,G_2,\dots,G_n]$ is formed by taking the graphs $G_i$  and any two vertices $v_i\in G_i$ and $v_j\in G_j$ are adjacent if $i$ is adjacent to $j$ in $\mathcal{G}$. 
		
	\end{definition}
	We now state the main result of this section.
	
	\begin{theorem}
		\label{T1}
		Consider $\mathcal{G}$ to be a graph with vertex set $V(\mathcal{G})=\{1,2,\ldots, n\}$.
		Assume that  $G_i$'s are disjoint connected graphs of order $n_i$  with vertex sets $V(G_i)$ for $1\le i\le n$.
		The Szeged index of  $G=\mathcal{G}[G_1,G_2,\dots,G_n]$ is given as follows:
		\begin{flalign*}
		Sz(G)=\sum_{i=1}^n Sz(G_i)+ 
		\sum_{\substack{i,j=1\\i<j,i\sim j}}^n\biggl[ \biggl(\sum_{\substack{k=1\\k\sim i, k\nsim j}}^n |G_k|+1\biggr)\biggl(\sum_{\substack{k=1\\k\nsim i, k\sim j}}^n |G_k|+1\biggr)\biggr]\mid G_i\mid \mid G_j\mid.
		\end{flalign*}
		
	\end{theorem}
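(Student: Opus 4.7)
The plan is to decompose $E(G)$ into the \emph{internal} edges (those lying within a single $G_i$) and the \emph{external} edges (those joining $G_i$ to $G_j$ for $i\sim j$ in $\mathcal{G}$), and to evaluate $\sum n_1(e|G)\,n_2(e|G)$ over each piece separately. The starting point is the distance description for generalized joins: if $x\in G_k$ and $a\in G_i$ with $k\neq i$, then every shortest $x$--$a$ path in $G$ projects to a walk in $\mathcal{G}$ from $k$ to $i$ (where consecutive moves either stay inside a single $G_\ell$ or cross an edge of $\mathcal{G}$), so $d_G(x,a)=d_{\mathcal{G}}(k,i)$, a quantity that depends on $k$ and $i$ but not on the particular vertex $a\in G_i$.

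For the internal contribution, fix an edge $ab\in E(G_i)$. The distance observation immediately yields $d_G(x,a)=d_G(x,b)$ for every $x\in V(G)\setminus V(G_i)$, so such $x$ contributes to neither $n_1(ab|G)$ nor $n_2(ab|G)$. Only the vertices inside $G_i$ can distinguish $a$ from $b$, and for them the relevant distances agree with the ones in $G_i$; hence $n_1(ab|G)\,n_2(ab|G)=n_1(ab|G_i)\,n_2(ab|G_i)$. Summing over $ab\in E(G_i)$ and then over $i$ produces the first term $\sum_{i=1}^n Sz(G_i)$.

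For the external contribution, fix $u\in G_i$ and $v\in G_j$ with $i\sim j$, and classify the remaining vertices by the block $G_k$ to which they belong. Vertices of $G_i\setminus\{u\}$ are at distance $1$ from $v$ (via the join) and are equidistant from $u$ and $v$, so only $u$ itself contributes from the block $G_i$; symmetrically for $G_j$. For $k\neq i,j$ I split into four cases according to whether $k\sim i$ and whether $k\sim j$: in the mixed case $k\sim i$, $k\not\sim j$, each $x\in G_k$ sits at distance $1$ from $u$ and at distance $\ge 2$ from $v$, so it contributes to $n_1$; the symmetric mixed case contributes to $n_2$; and the two remaining cases (both adjacent, or neither adjacent) force $d_G(x,u)=d_G(x,v)$ by the distance formula above. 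This gives
\[
n_1(uv|G)=1+\sum_{\substack{k\sim i\\ k\not\sim j}}|G_k|,\qquad n_2(uv|G)=1+\sum_{\substack{k\not\sim i\\ k\sim j}}|G_k|,
\]
independent of the particular edge $uv$ chosen between $G_i$ and $G_j$. Multiplying by the number of such edges, $|G_i||G_j|$, and summing over unordered pairs $\{i,j\}$ with $i\sim j$, reproduces the second term in the formula.

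The main obstacle is the careful four-case analysis for the external edges and, underlying it, the justification that the ``same block'' and ``neither adjacent'' vertices truly are equidistant from $u$ and $v$; everything reduces to the inter-block distance identity $d_G(x,a)=d_{\mathcal{G}}(k,i)$, so the key preliminary step is to establish this identity cleanly at the outset and then invoke it uniformly in each subsequent case.
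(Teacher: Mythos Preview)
Your outline follows the paper's proof essentially step by step: the same split into internal and external edges, the same observation that vertices outside $G_i$ cannot distinguish two vertices inside $G_i$, and the same case split for external edges according to the adjacencies of $k$ with $i$ and $j$ in~$\mathcal{G}$. In that sense you have reproduced the paper's argument faithfully.

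There is, however, a genuine gap---present in your write-up and in the paper's own proof---and in fact the stated formula is not correct in full generality. For an external edge $uv$ with $u\in G_i$, $v\in G_j$, you assert that each $x\in G_i\setminus\{u\}$ is ``equidistant from $u$ and $v$''. Certainly $d_G(x,v)=1$ via the join, but $d_G(x,u)=2$ whenever $x$ is \emph{not} adjacent to $u$ inside $G_i$ (the join supplies the $2$-path $x\,v\,u$); such an $x$ then lies in $n_2(uv|G)$ and is missed by your count. A parallel issue affects the internal edges: you claim that for $x\in G_i$ ``the relevant distances agree with the ones in $G_i$'', but once $i$ has a neighbour in $\mathcal{G}$ one has $d_G(x,a)=\min\bigl(d_{G_i}(x,a),2\bigr)$, so a vertex with $d_{G_i}(x,a)=2$, $d_{G_i}(x,b)=3$ is counted in $n_1(ab|G_i)$ yet is equidistant in $G$. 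A concrete counterexample to the theorem: take $\mathcal{G}=K_2$, $G_1=P_3$, $G_2=K_1$; your formula (and the paper's) gives $Sz(P_3)+1\cdot 1\cdot |G_1|\,|G_2|=4+3=7$, whereas the resulting graph is $K_4$ minus an edge and has Szeged index $9$. Both defects disappear when every $G_i$ is a clique---exactly the situation in all the power-graph applications later in the paper---so the formula is valid there; but as a general statement it needs that extra hypothesis, and your proof, like the paper's, tacitly assumes it.
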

	
	\begin{proof}
		
		Suppose $ab\in E(G)$.
		Since $ab$ is an edge in $G$, two possible cases may arise:
		\begin{enumerate}
			\item [1.] $a\in G_i$ and $b\in G_i$ for some $i$ where  $1\le i\le n$, or
			\item [2.] $a\in G_i$ and $b\in G_j$ for some $1\le i,j\le n$ such that $i\sim j$ in $\mathcal{G}$. 
		\end{enumerate}
		Hence,
		\begin{flalign}
		\label{A}
		\tag{\textcolor{red}{\textbf{A}}}
		\begin{split}
		Sz(G)&=\sum_{e(=ab)\in E(G)} n(ab|G)
		\\
		&=\sum_{i=1}^n \bigg(\sum_{\substack{e(=ab)\in E(G)\\{a,b\in G_i}}} n(ab|G)\bigg) +\sum_{\substack{e(=ab)\in E(G)\\ a\in G_i,b\in G_j\\i\sim j}} n(ab|G)
		\\
		&=\sum_{i=1}^n\bigg(\sum_{\substack{e(=ab)\in E(G)\\{a,b\in G_i}}} n_1(ab|G)n_2(ab|G)\bigg)+\sum_{\substack{e(=ab)\in E(G)\\ a\in G_i,b\in G_j\\i\sim j}} n(ab|G)
		\end{split}
		\end{flalign}
		We shall now evaluate each term of the summation given in \Cref{A} in what follows. 
		
		We have,
		\begin{flalign}
		\label{B}
		\tag{*}
		\begin{split}
		n_1(ab|G)&=\lvert \{x\in G:d(x,a)<d(x,b)\}\rvert
		\\
		&=\lvert\biggl(\{x\in G_i: d(x,a)<d(x,b)\}\cup \{x\in G\setminus G_i:d(x,a)<d(x,b)\}\biggr)\rvert
		\\
		&=\lvert\{x\in G_i: d(x,a)<d(x,b)\}\rvert+\lvert  \{x\in G\setminus G_i:d(x,a)<d(x,b)\}\rvert
		\\
		&=n_1(ab|G_i)+\lvert\{x\in G\setminus G_i:d(x,a)<d(x,b)\}\rvert.
		\end{split}
		\end{flalign}
		We note that  since $a,b\in G_i$, then $d(x,a)=d(x,b)$ for all $x\in G\setminus G_i$.
		Hence we find that \begin{flalign*}
		\{x\in G\setminus G_i:d(x,a)<d(x,b)\}&=\emptyset
		\end{flalign*}
		Thus, we have,
		\begin{flalign}
		\label{C}
		\tag{**}
		\lvert \{x\in G\setminus G_i:d(x,a)<d(x,b)\}\rvert &=0.
		\end{flalign}
		
		Hence, using \Cref{B,C}, we find that for all $1\le i\le n$,
		\begin{equation}
		\label{1}
		n_1(ab|G)=n_1(ab|G_i).
		\end{equation}
		
		Using similar arguments as above, we find that for all $1\le i\le n$,
		\begin{equation}
		\label{2}
		n_2(ab|G)=n_2(ab|G_i).
		\end{equation}
		
		Hence we have,
		\begin{equation}
		\label{3}
		\begin{split}
		\sum_{i=1}^n \bigg(\sum_{\substack{e(=ab)\in E(G)\\{a,b\in G_i}}} n_1(ab|G)n_2(ab|G)\bigg)
		&=\sum_{i=1}^n n_1(ab|G_i)n_2(ab|G_i)
		\\
		&=\sum_{i=1}^n Sz(G_i).
		\end{split}
		\end{equation}

		We now evaluate the second term of the summation given in \Cref{A}.
		
		We have,
		\begin{flalign*}
		&\sum_{\substack{e(=ab)\in E(G)\\ a\in G_i,b\in G_j\\i\sim j}} n(ab|G)
		=\sum_{\substack{e(=ab)\in E(G)\\ a\in G_i,b\in G_j\\i\sim j}} n_1(ab|G)n_2(ab|G) 
		\end{flalign*}
		Here $a\in G_i,b\in G_j$ for some $i,j$ with $i\sim j$.
		Consider the set $\{x\in G:d(x,a)<d(x,b)\}$.
		Suppose $x\in V(G)$ satisfies the relation $d(x,a)<d(x,b)$. Then one possibility is $x=a$.
		Moreover, $x\notin G_i$, as otherwise, since $i\sim j$, $x$ will be adjacent to $b$ which would imply $d(x,b)=1$, which is false.
		Thus, $x$ must belong to all those $G_k$ such that $i\sim k$, but $j\nsim k$ in $\mathcal{G}$.
		
		Thus, 
		\begin{equation}\label{4}
		\begin{split}
		n_1(ab|G)&=\lvert\{x\in G:d(x,a)<d(x,b)\}\rvert
		\\
		&=\sum_{\substack{k=1\\k\sim i, k\nsim j}}^n \mid G_k\mid +1.
		\end{split}
		\end{equation}
		
		Using similar arguments we obtain,
		
		\begin{align}\label{5}
		\begin{split}
		n_2(ab|G)&=\lvert\{x\in G:d(x,b)<d(x,a)\}\rvert
		\\
		&=\sum_{\substack{k=1\\k\nsim i, k\sim j}}^n \mid G_k\mid +1.
		\end{split}
		\end{align}
		
		Using \Cref{4,5} we obtain,
		
		\begin{align}\label{6}
		\begin{split}
		\sum_{\substack{e(=ab)\in E(G)\\ a\in G_i,b\in G_j\\i\sim j}} n(ab|G)
		&=\sum_{\substack{i,j=1\\i<j,i\sim j}}^n\biggl[ \biggl(\sum_{\substack{k=1\\k\sim i, k\nsim j}}^n |G_k|+1\biggr)\biggl(\sum_{\substack{k=1\\k\nsim i, k\sim j}}^n |G_k|+1\biggr)\biggr]\mid G_i\mid \mid G_j\mid.
		\end{split}
		\end{align}
		
		Putting the values obtained from \Cref{3,6} in \Cref{A}, we obtain
		\begin{flalign*}
		Sz(G)=\sum_{i=1}^n Sz(G_i)+ 
		\sum_{\substack{i,j=1\\i<j,i\sim j}}^n\biggl[ \biggl(\sum_{\substack{k=1\\k\sim i, k\nsim j}}^n |G_k|+1\biggr)\biggl(\sum_{\substack{k=1\\k\nsim i, k\sim j}}^n |G_k|+1\biggr)\biggr]\mid G_i\mid \mid G_j\mid.
		\end{flalign*}
		
		This completes the proof. 
	\end{proof}
	
	In the next section, as an application to \Cref{T1}, we calculate the Szeged index of the power graph of the finite cyclic group $\mathbb{Z}_n$ for $n>2$.
	We also determine the Szeged index of the power graph of the dihedral group $\mathrm{D}_n$ for  $n>2$.
	
	\section{Szeged Index of Power Graph of Finite Groups}
	
	The notion of  \textit{directed power graph} of a semigroup $S$ was introduced by Kelarav and Quinn in \cite{kelarev2002directed} as the digraph $\mathcal{P}(S)$ with vertex set $S$ in which there is an arc from one vertex $u$ to another vertex $v$ if  $v = u^m$ for some $m\in \mathbb{N}$.
	Motivated by this, Chakrabarty et al. in \cite{chakrabarty2009undirected} defined the undirected \textit{power graph} $\mathcal{P}(S)$ of a semi-group $S$ as an undirected graph whose vertex set is $S$ and two distinct vertices $u,v\in S$  are adjacent if  $u=v^m$ or $v=u^n$ for some $m,n\in \mathbb{N}$.
	In \cite{chakrabarty2009undirected} it was further shown that  $\mathcal{P}(G)$ is connected for any finite group $G$, and it is complete if and only if $G$ is a cyclic group of order $1$ or $p^m$ where $p$ is a prime and $m\in \mathbb{N}$.
	The power graph has received widespread attention from researchers over the recent years. Cameron and Ghosh have shown in \cite{cameron2011power} that if two finite abelian groups $G_1$, $G_2$ have isomorphic power graphs, then $G_1$ and $G_2$ are isomorphic to each other. 
	In \cite{cameron2010power2} Cameron has proved  that if two finite groups have isomorphic power graphs, then they
	have the same number of elements of each order.
	The spectral properties of power graphs of finite groups have been studied in \cite{banerjee2023reduced,banerjee2023spectra,banerjee2021spectra,banerjee2020signless}.
	For more interesting results on power graphs, the readers may refer to \cite{abawajy2013power}. 
	
	\subsection{Szeged Index of Power Graph of $\mathbb{Z}_n$}
	\label{S2}
	In this section, we derive a formula for the Szeged index of the power graph of $\mathbb{Z}_n$ for any $n>2$.
	Before getting into the details, we first throw some light on the structure of $\mathcal{P}(\mathbb{Z}_n)$.

	We shall denote the prime decomposition of $n$ by $n=p_1^{\alpha_1}p_2^{\alpha_2}\cdots p_m^{\alpha_m}$, where $p_1<p_2<\cdots <p_m$ are primes arranged in increasing order and $\alpha_i's$ are positive integers. 
	An element $d$ is said to be a \textit{proper divisor} of $n$ if $d\mid n$ and $d\notin\{1,n\}$.
	Given $n\in \mathbb{N}$, the total number of positive divisors of $n$ is given as follows: 
	\begin{flalign*}
	D'=(\alpha_1+1)(\alpha_2+1)\cdots(\alpha_m+1).
	\end{flalign*}
	The number of  \textit{positive proper divisors} of $n$ is given as follows: 
	\begin{flalign}\label{div}
	D=(\alpha_1+1)(\alpha_2+1)\cdots(\alpha_m+1)-2.
	\end{flalign}

	Suppose $d_1<d_2<\cdots< d_D$ be the set of all positive proper divisors of $n$ arranged in increasing order.

	Assume that  $\mathcal{G}$ is a graph  with vertex set  $V(\mathcal{G})=\{1,d_1,d_2,\dots,d_D\}$.
	Moreover, the adjacency criterion in $\mathcal{G}$ is given as follows:
	
	\paragraph{
		$1\sim d_i$ for all $1\le i\le D$, and $d_i\sim d_j$ if and only if $d_i\mid d_j$.
	}
	
	Using the above information, we have the following result about the structure of $\mathcal{P}(\mathbb Z_n)$.
	
	\begin{theorem}\cite[Theorem $2.2$]{mehranian2016note}
		\label{T2}
		If $n> 2$, then $\mathcal{P}(\mathbb Z_n)=\mathcal{G}[K_\ell,K_{\varphi(d_1)},K_{\varphi(d_2)},\dotso,K_{\varphi(d_D)}]$, where $d_1,d_2,\ldots,d_D$ form the set of all positive proper divisors of $n$, $D$ is given by \Cref{div} and $\ell=\varphi(n)+1$.
        Here, $\varphi(n)$ denotes the Euler's totient function, which counts the positive integers up to a given integer $n$ that are relatively prime to $n$.
	\end{theorem}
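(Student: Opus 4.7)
The plan is to classify the elements of $\mathbb{Z}_n$ by their order. For every divisor $d$ of $n$, set $V_d=\{x\in\mathbb{Z}_n:\mathrm{ord}(x)=d\}$, so that $|V_d|=\varphi(d)$ and $\mathbb{Z}_n=\bigsqcup_{d\mid n}V_d$. The heart of the proof is the following adjacency criterion: for $x\in V_d$ and $y\in V_{d'}$, one has $x\sim y$ in $\mathcal{P}(\mathbb{Z}_n)$ if and only if $d\mid d'$ or $d'\mid d$. To establish this, I would use the fact that $\mathbb{Z}_n$, being cyclic, possesses a unique subgroup of each order $d\mid n$, and this subgroup coincides with $\langle x\rangle$ whenever $\mathrm{ord}(x)=d$. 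Then $x=y^m$ forces $x\in\langle y\rangle$, hence $d\mid d'$; conversely, if $d\mid d'$, the unique subgroup of order $d$ is $\langle x\rangle\subseteq\langle y\rangle$, so $x$ is a power of $y$ and adjacency follows.

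From this criterion I would extract three structural consequences. First, each $V_d$ induces a clique (take $d=d'$). Second, $V_1=\{0\}$ and the generator set $V_n$ are both universally adjacent, since $1\mid d$ and $d\mid n$ hold for every divisor $d$ of $n$; consequently $V_1\cup V_n$ induces a complete subgraph on $1+\varphi(n)=\ell$ vertices, which will become the block $K_{\ell}$. Third, for distinct proper divisors $d_i,d_j$, every vertex of $V_{d_i}$ is either adjacent to every vertex of $V_{d_j}$ or to none of them, according to whether $d_i\mid d_j$ or $d_j\mid d_i$ holds; this is exactly the ``fully joined or fully disjoint'' behaviour required by Definition \ref{generalizedcomposition}.

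Finally, I would identify $\mathcal{P}(\mathbb{Z}_n)$ with $\mathcal{G}[K_\ell,K_{\varphi(d_1)},\dots,K_{\varphi(d_D)}]$ by assigning the vertex labelled $1$ of $\mathcal{G}$ to the amalgamated block $V_1\cup V_n$ and the vertex labelled $d_i$ to $V_{d_i}$. Comparing the adjacency rules of $\mathcal{G}$ stated just before the theorem with the three consequences above yields the identification. The main subtlety I expect is precisely this merging of $V_1$ and $V_n$ into a single generalized-join block: one has to verify that both classes exhibit identical external adjacency (each is completely joined to every other block), so that replacing the two would-be labels $1$ and $n$ by the single label $1$ in $\mathcal{G}$ produces the correct graph. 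Once this is checked, Definition \ref{generalizedcomposition} reassembles the blocks into exactly $\mathcal{G}[K_\ell,K_{\varphi(d_1)},\dots,K_{\varphi(d_D)}]$, completing the argument.
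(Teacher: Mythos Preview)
Your proposal is correct and is essentially the standard argument for this result. Note, however, that the paper does not supply its own proof of this theorem: it is quoted verbatim from \cite[Theorem~2.2]{mehranian2016note} and used as a black box, so there is no in-paper proof to compare against. Your order-class decomposition $V_d=\{x:\mathrm{ord}(x)=d\}$, the divisibility adjacency criterion via the uniqueness of subgroups in cyclic groups, and the merging of $V_1\cup V_n$ into the single block $K_\ell$ together reconstruct exactly the structure asserted, and nothing is missing.
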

	
	We now use \Cref{T1,T2} to compute the Szeged index of $\mathcal{P}(\mathbb Z_n)$.

	\begin{theorem}
		\label{T3}
		If $n> 2$, then the Szeged index of $\mathcal{P}(\mathbb Z_n)$ is given as follows:
		
		\begin{flalign*}
		Sz(\mathcal{P}(\mathbb Z_n))&=\binom{\ell}{2}+\sum_{i=1}^{D}\binom{\varphi(d_i)}{2}+\sum_{i=1}^D\bigg(n-\ell- {\varphi(d_i)}+1\bigg) {\varphi(d_i)}\times \ell 
		\\
		&+\sum_{\substack{i,j=1\\i<j,d_i\sim d_j}}^D\biggl[ \biggl(\sum_{\substack{k=1\\d_k\sim d_i, d_k\nsim d_j}}^D {\varphi(d_k)}+1\biggr)\biggl(\sum_{\substack{k=1\\d_k\nsim d_i, d_k\sim d_j}}^D {\varphi(d_k)}+1\biggr)\biggr] {\varphi(d_i)}{\varphi(d_j)}
		\end{flalign*}
	\end{theorem}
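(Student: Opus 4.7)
The plan is to directly apply \Cref{T1} to the generalized-join decomposition of $\mathcal{P}(\mathbb Z_n)$ supplied by \Cref{T2}. Since \Cref{T2} gives $\mathcal{P}(\mathbb Z_n)=\mathcal{G}[K_\ell,K_{\varphi(d_1)},\ldots,K_{\varphi(d_D)}]$, the formula of \Cref{T1} expresses $Sz(\mathcal{P}(\mathbb Z_n))$ as the sum of the constituent Szeged indices plus a cross-term sum over the adjacent pairs of $\mathcal{G}$.

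First I would evaluate the constituent piece. For any complete graph $K_m$ and any edge $ab\in E(K_m)$, every other vertex lies at distance $1$ from both $a$ and $b$, so $n_1(ab|K_m)=n_2(ab|K_m)=1$ and therefore $Sz(K_m)=\binom{m}{2}$. Summing over the $D+1$ blocks of the join yields $\binom{\ell}{2}+\sum_{i=1}^{D}\binom{\varphi(d_i)}{2}$, which accounts for the first two summands of the statement.

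Next I would split the cross-term sum of \Cref{T1} by exploiting the vertex set $\{1,d_1,\ldots,d_D\}$ of $\mathcal{G}$ into two groups: (a) pairs of the form $(1,d_i)$ for $1\le i\le D$, and (b) pairs $(d_i,d_j)$ with $i<j$ and $d_i\mid d_j$. Part (b) slots directly into the \Cref{T1} expression with block sizes $\varphi(d_i),\varphi(d_j)$, and reproduces the fourth summand of the statement verbatim. For part (a), the key structural input is that the vertex $1$ of $\mathcal{G}$ is adjacent to every other vertex of $\mathcal{G}$; consequently the set $\{k:k\nsim 1,\,k\sim d_i\}$ is empty, so one of the two factors in the \Cref{T1} expression collapses to $1$. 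The remaining factor $\sum_{k\sim 1,\,k\nsim d_i}\varphi(d_k)+1$ I would recast using the classical identity $\sum_{d\mid n}\varphi(d)=n$, equivalently $\sum_{i=1}^{D}\varphi(d_i)=n-\ell$, to bring it into the compact shape $n-\ell-\varphi(d_i)+1$. Multiplying by $|G_1||G_{d_i}|=\ell\,\varphi(d_i)$ and summing over $i$ produces the third summand.

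The main technical step is the bookkeeping in part (a): one must carefully partition the proper divisors of $n$ by their divisibility relation to $d_i$, then combine the totient identity with the $+1$ contributions arising from the endpoints $a,b$ themselves, so that the sum consolidates into the form $(n-\ell-\varphi(d_i)+1)\,\varphi(d_i)\,\ell$. Once parts (a) and (b) are merged with the constituent contribution, the statement follows.
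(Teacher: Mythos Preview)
Your overall strategy---applying \Cref{T1} to the decomposition of \Cref{T2}, computing $Sz(K_m)=\binom{m}{2}$ for the constituent pieces, and then splitting the cross-term sum according to whether the $\mathcal G$-edge involves the universal vertex $1$---is exactly the route the paper takes.

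There is, however, a genuine gap in your part (a). You claim that the factor $\sum_{k\sim 1,\,k\nsim d_i}\varphi(d_k)+1$ can be rewritten, via $\sum_{m=1}^{D}\varphi(d_m)=n-\ell$, as $n-\ell-\varphi(d_i)+1$. That identity would require
\[
\sum_{\substack{k\sim 1\\k\nsim d_i}}\varphi(d_k)\;=\;\sum_{m=1}^{D}\varphi(d_m)-\varphi(d_i),
\]
i.e.\ that the \emph{only} proper divisor not adjacent to $d_i$ is $d_i$ itself. This is false whenever $d_i$ has a nontrivial divisibility relation with some other proper divisor. For instance, take $n=pq^{2}$ and $d_i=q$: then $q\sim q^{2}$ and $q\sim pq$ in $\mathcal G$, so $\sum_{k\sim 1,\,k\nsim q}\varphi(d_k)=\varphi(p)=p-1$, whereas $n-\ell-\varphi(q)=pq+q^{2}-2q$. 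These disagree already for $p=2$, $q=3$.

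The paper's own proof does \emph{not} perform this simplification: it stops at the expression
\[
\sum_{i=1}^{D}\Bigl(\sum_{\substack{k=1\\ d_k\nsim d_i}}^{D}\varphi(d_k)+1\Bigr)\ell\,\varphi(d_i),
\]
which is what your argument actually establishes before the faulty recasting. In other words, your derivation is correct up to that point and matches the paper; the final ``compact shape'' $n-\ell-\varphi(d_i)+1$ in the theorem statement is not something you can reach from the totient identity alone, and the paper never justifies that step either---the statement and the proof in the paper do not in fact agree on the form of the third summand.
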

	
	\begin{proof}
		We note that $Sz(K_\ell)=\binom{\ell}{2}$.
		Moreover, if $d_i>2$, then $Sz(K_{\varphi(d_i)})=\binom{\varphi(d_i)}{2}$ for $1\le i\le D$.
		
		We define $S=\{x\in \mathbb{Z}_n:\gcd(x,n)=1\}\cup \{0\}$.
		Consider an  edge $e=ab$ of $\mathcal{P}(\mathbb Z_n)$ such that  $a\in S$ and $b\notin S$.
		Since the vertices of $S$ are adjacent to every other vertex of $\mathcal{P}(\mathbb Z_n)$, we observe that $d(x,a)=1$ for all $x\neq a$. Thus, $d(x,a)>d(x,b)$ is true only when $x=b$.
		
		Hence,
		\begin{flalign*}
		n_2(ab|\mathcal{P}(\mathbb Z_n))=\lvert\{x:d(x,a)>d(x,b)\}\rvert=1.
		\end{flalign*}
		
		It follows that 
		\begin{align}
		\label{7}
		\begin{split}
		\sum_{\substack{e(=ab)\in E(\mathcal{P}(\mathbb Z_n))\\
				a\in S,b\notin S}} n(ab|\mathcal{P}(\mathbb Z_n))
		&=\sum_{\substack{e(=ab)\in E(\mathcal{P}(\mathbb Z_n))\\
				a\in S,b\notin S}} n_1(ab|\mathcal{P}(\mathbb Z_n))
		\\
		&=	\sum_{\substack{e(=ab)\in E(\mathcal{P}(\mathbb Z_n))\\
				a\in S,b\notin S}}\lvert\{x:d(x,a)<d(x,b)\}\rvert.
		\end{split}
		\end{align}
		
		Since  $b\notin S$, so $b\in K_{\varphi(d_i)}$ for some $1\le i\le D$.
		In order for $x$ to satisfy $d(x,a)<d(x,b)$, $x$ must be $a$ or $x$ must be in $K_{\phi(d_k)}$ such that $d_k\nsim d_i$.
		
		Thus using \Cref{7} we get,
		\begin{align}\label{8}
		\begin{split}
		\sum_{\substack{e(=ab)\in E(\mathcal{P}(\mathbb Z_n))\\
				a\in S,b\notin S}} n(ab|\mathcal{P}(\mathbb Z_n))
		&=\sum_{\substack{i=1}}^D\biggl[ \biggl(\sum_{\substack{k=1\\d_k\nsim d_i}}^D {\varphi(d_k)}+1\biggr)\biggr]\times \ell\times \varphi(d_i).
		\end{split}
		\end{align}
		
		Using \Cref{T1} and \Cref{8} we have
		\begin{flalign*}
			Sz(\mathcal{P}(\mathbb{Z}_n)) 
			&= \binom{\ell}{2} 
			+ \sum_{i=1}^{D} \binom{\varphi(d_i)}{2} 
			+ \sum_{i=1}^D \biggl[ \biggl(\sum_{\substack{k=1 \\ d_k \nsim d_i}}^D \varphi(d_k) + 1 \biggr) \biggr] \times \ell \times \varphi(d_i) \\
			&\quad + \sum_{\substack{i,j=1 \\ i < j, d_i \sim d_j}}^D 
			\biggl[ 
			\biggl(\sum_{\substack{k=1 \\ d_k \sim d_i \\ d_k \nsim d_j}}^D |K_{\varphi(d_k)}| + 1\biggr) 
			\biggl(\sum_{\substack{k=1 \\ d_k \nsim d_i \\ d_k \sim d_j}}^D |K_{\varphi(d_k)}| + 1\biggr) 
			\biggr] |K_{\varphi(d_i)}| |K_{\varphi(d_j)}| \\
			&= \binom{\ell}{2} 
			+ \sum_{i=1}^{D} \binom{\varphi(d_i)}{2} 
			+ \sum_{i=1}^D \biggl[ \biggl(\sum_{\substack{k=1 \\ d_k \nsim d_i}}^D \varphi(d_k) + 1 \biggr) \biggr] \times \ell \times \varphi(d_i) \\
			&\quad + \sum_{\substack{i,j=1 \\ i < j, d_i \sim d_j}}^n 
			\biggl[ 
			\biggl(\sum_{\substack{k=1 \\ d_k \sim d_i, d_k \nsim d_j}}^n \varphi(d_k) + 1 \biggr) 
			\biggl(\sum_{\substack{k=1 \\ d_k \nsim d_i, d_k \sim d_j}}^n \varphi(d_k) + 1 \biggr) 
			\biggr] \varphi(d_i) \varphi(d_j).
		\end{flalign*}
		
	\end{proof}
	We now use \Cref{T3} to determine the Szeged index of $\mathcal{P}(\mathbb Z_n)$ for some specific forms of $n$. 
	\begin{proposition}
		If $n=p^m$ where $p$ is a prime and $m\in \mathbb{N}$, then $Sz(\mathcal{P}(\mathbb Z_n))=\binom{n}{2}.$
	\end{proposition}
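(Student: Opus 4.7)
The plan is to bypass the general formula of \Cref{T3} entirely and exploit a structural fact about $\mathcal{P}(\mathbb{Z}_{p^m})$ that was already recalled earlier in the excerpt: by the result of Chakrabarty et al., the power graph $\mathcal{P}(G)$ of a finite group $G$ is complete if and only if $G$ is cyclic of order $1$ or $p^m$. In particular, for $n = p^m$ the power graph $\mathcal{P}(\mathbb{Z}_n)$ is the complete graph $K_n$, so it suffices to show that $Sz(K_n) = \binom{n}{2}$.

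The computation of $Sz(K_n)$ from the definition in \Cref{Sz} is a one-line argument. For any edge $e = ab$ of $K_n$ and any vertex $x \notin \{a,b\}$, we have $d(x,a) = d(x,b) = 1$, so $x$ contributes to neither $n_1(ab\mid K_n)$ nor $n_2(ab\mid K_n)$. The only contributions come from $x = a$ (giving $d(x,a) = 0 < 1 = d(x,b)$) and $x = b$ (giving the symmetric inequality), so $n_1(ab\mid K_n) = n_2(ab\mid K_n) = 1$. Summing over the $\binom{n}{2}$ edges yields $Sz(K_n) = \binom{n}{2}$.

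Since no genuine case analysis or estimation is involved, there is essentially no obstacle here; the only thing worth flagging is that an alternative route via \Cref{T3} would work but would require simplifying the triple sum using the identity $\sum_{d\mid n}\varphi(d) = n$ and the fact that the divisor lattice of $p^m$ is a chain (so every pair of proper divisors is adjacent in $\mathcal{G}$, making several of the $d_k \sim d_i$, $d_k \nsim d_j$ index sets empty). This detour is strictly more work and is best avoided in favor of the completeness observation above.
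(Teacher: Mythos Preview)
Your proposal is correct and follows essentially the same route as the paper: both invoke the result of Chakrabarty et al.\ that $\mathcal{P}(\mathbb{Z}_{p^m})$ is the complete graph $K_n$, and then use $Sz(K_n)=\binom{n}{2}$ (which the paper has already recorded in the proof of \Cref{T3}). Your write-up simply spells out the one-line computation of $Sz(K_n)$ that the paper leaves implicit.
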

	
	\begin{proof}
		We know that if $n=p^m$, then $\mathcal{P}(\mathbb Z_n)$ is a complete graph (\cite{chakrabarty2009undirected}).
		Hence the result follows.
	\end{proof}
	\begin{proposition}
		If $n=pq$, where $p,q$ are prime numbers with $p<q$, then 
		\begin{flalign*}
		Sz(\mathcal{P}(\mathbb Z_n))=\frac{\ell(\ell-1)}{2}+\frac{(p-1)(p-2)}{2}+\frac{(q-1)(q-2)}{2}+\ell (2pq-(p+q)).
		\end{flalign*}
	\end{proposition}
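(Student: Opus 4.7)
The plan is to apply Theorem \ref{T3} directly with $n=pq$, since this is the natural specialization and the combinatorics at $n=pq$ is simple enough to carry out the sums in closed form.

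First I would identify the structural data. For $n=pq$ with $p<q$ prime, the only proper divisors are $d_1=p$ and $d_2=q$, so $D=2$. Consequently $\varphi(d_1)=p-1$, $\varphi(d_2)=q-1$, and $\ell=\varphi(pq)+1=(p-1)(q-1)+1$. Moreover, in the graph $\mathcal{G}$ appearing in Theorem \ref{T2}, the vertices $p$ and $q$ are not adjacent, since neither divides the other. This single observation will collapse two of the four sums in Theorem \ref{T3}.

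Next I would substitute into the formula of Theorem \ref{T3} term by term. The first two terms give $\binom{\ell}{2}+\binom{p-1}{2}+\binom{q-1}{2}$, which already produces the first three summands of the claim. For the third term, when $i=1$ the inner sum runs over indices $k$ with $d_k\nsim d_1=p$; the only candidate is $d_2=q$, contributing $\varphi(q)+1=q$, so the $i=1$ summand is $q\cdot\ell\cdot(p-1)$. Symmetrically the $i=2$ summand is $p\cdot\ell\cdot(q-1)$. Adding these produces
\begin{flalign*}
\ell\bigl[q(p-1)+p(q-1)\bigr]=\ell(2pq-p-q),
\end{flalign*}
which is precisely the last summand of the claim. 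Finally, the fourth term in Theorem \ref{T3} is an empty sum because the only candidate pair $(i,j)=(1,2)$ satisfies $d_1\nsim d_2$, contributing zero.

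Combining these four contributions immediately yields the stated expression. There is no genuine obstacle here: the only point requiring care is verifying that the convention in Theorem \ref{T3} treats $d_k\nsim d_i$ as excluding $k=i$, which is consistent with its derivation (vertices inside the same clique $K_{\varphi(d_i)}$ are adjacent to $b$, so they are not counted as being at distance greater than $1$). Once this is noted, the proof is a direct substitution.
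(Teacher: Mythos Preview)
Your proposal is correct and follows essentially the same route as the paper: identify $d_1=p$, $d_2=q$, $D=2$, observe $d_1\nsim d_2$, and substitute into Theorem~\ref{T3}, whereupon the fourth sum is empty and the third sum simplifies to $\ell\bigl(q(p-1)+p(q-1)\bigr)=\ell(2pq-p-q)$. Your closing remark that the sum over $d_k\nsim d_i$ must be read with $k\neq i$ is a useful clarification of a convention the paper leaves implicit.
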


	\begin{proof}
		If $n=pq$, then the positive proper divisors of $n$ are $p,q$.
		Thus, $d_1=p,d_2=q$, $D=2$ and $\ell=\varphi(pq)+1$. Since $p\nmid q$, we note that $d_1\nsim d_2$.
		Using \Cref{T2} we find that  $\mathcal{P}(\mathbb{Z}_n)$ is the $\mathcal{G}$-join of $K_{\ell},K_{\phi(p)},K_{\phi(q)}$ where $\mathcal{G}$ is shown in \Cref{F1}.
		\begin{figure}[H]
			
			\centering
			\begin{tikzpicture}
			\node[shape=circle,draw=black] (1) at (-2,0) {$p$};  
			\node[shape=circle,draw=black] (2) at (0,0)  {$1$};   
			\node[shape=circle,draw=black] (3) at (2,0)  {$q$};   
			
			\draw (1) -- (2);  
			\draw (2) -- (3);

			\end{tikzpicture}
			\caption{$\mathcal G=P_3$ for $\mathcal{P}(\mathbb{Z}_{pq})$}
			\label{F1}
		\end{figure}
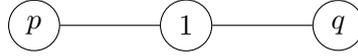
		
		Using \Cref{T3} we have
		
		\begin{flalign*}
		Sz(\mathcal{P}(\mathbb{Z}_{pq}))&=\binom{\ell}{2}+\binom{\varphi(p)}{2}+\binom{\varphi(q)}{2}
		+\bigg((\varphi(q)+1)\times \ell\times\varphi(p)\bigg)
		\\
		&+\bigg((\varphi(p)+1)\times \ell\times\varphi(q)\bigg)
		\\
		&=\frac{\ell(\ell-1)}{2}+\frac{(p-1)(p-2)}{2}+\frac{(q-1)(q-2)}{2}+q\ell(p-1)+p\ell(q-1)
		\\
		&=\frac{\ell(\ell-1)}{2}+\frac{(p-1)(p-2)}{2}+\frac{(q-1)(q-2)}{2}+\ell(2pq-(p+q)).
		\end{flalign*}

	\end{proof}
	
	\begin{proposition}
		If $n=pq^2$, then the Szeged index of $\mathcal{P}(\mathbb{Z}_n)$ is given by \begin{flalign*}
		Sz(\mathcal{P}(\mathbb{Z}_n))&=\binom{\ell}{2}+\binom{\varphi(p)}{2}+\binom{\varphi(q)}{2}+\binom{\varphi(q^2)}{2}+\binom{\varphi(pq)}{2}
		\\
		&+\ell\biggl(2pq^3 - 2pq^2 + 3pq - 2p - 2q^3 + 3q^2 - 3q + 1\biggr)
		\\
		&+p^2q^4-3p^2q^3+5p^2q^2-4p^2q + p^2 - 3pq^2 + 4pq - p - q^4 + 4q^3 - 4q^2 + q.
		\end{flalign*}
	\end{proposition}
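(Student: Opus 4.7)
The plan is to apply Theorem 3.3 directly to $n=pq^2$, computing each of the three bracketed contributions for this specific divisor structure. The positive proper divisors of $n$ are $p,q,q^2,pq$, so $D=4$ and $\ell=\varphi(pq^2)+1=q(p-1)(q-1)+1$. The skeleton graph $\mathcal{G}$ has vertex set $\{1,p,q,q^2,pq\}$, and I first need to record the adjacencies among the non-identity vertices: $q\sim q^2$, $p\sim pq$, $q\sim pq$, while the pairs $(p,q)$, $(p,q^2)$ and $(q^2,pq)$ are nonadjacent. The vertex $1$ is adjacent to every $d_i$, which is built into the formula of Theorem 3.3 through the $K_\ell$ block.

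The first step is to write down the ``internal'' Szeged contributions $\binom{\ell}{2}+\binom{\varphi(p)}{2}+\binom{\varphi(q)}{2}+\binom{\varphi(q^2)}{2}+\binom{\varphi(pq)}{2}$, which reproduces the first line of the target formula verbatim. Next, I compute the sum over the units side: for each $d_i\in\{p,q,q^2,pq\}$, I tabulate the set $\{d_k:d_k\nsim d_i\}$ and the corresponding $\varphi(d_k)$ total. A short case-check gives
\[
\begin{aligned}
i=p&:\ \varphi(q)+\varphi(q^2)+1=q^2,\\
i=q&:\ \varphi(p)+1=p,\\
i=q^2&:\ \varphi(p)+\varphi(pq)+1=(p-1)q+1,\\
i=pq&:\ \varphi(q^2)+1=q^2-q+1.
\end{aligned}
\]
Multiplying each entry by $\ell\cdot\varphi(d_i)$ and summing should telescope, after expansion, into $\ell\bigl(2pq^3-2pq^2+3pq-2p-2q^3+3q^2-3q+1\bigr)$, matching the second line of the claim.

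The third step handles the three edges among non-identity divisors, namely $(p,pq)$, $(q,q^2)$, $(q,pq)$. For each such pair $(d_i,d_j)$, I determine which $d_k$ lies in exactly one of the two neighborhoods: for $(p,pq)$ only $k=q$ is ``on the $pq$ side''; for $(q,q^2)$ only $k=pq$ is ``on the $q$ side''; for $(q,pq)$ the splits are $k=q^2$ on the $q$ side and $k=p$ on the $pq$ side. Substituting into the cross-term formula of Theorem 3.3 yields three summands
\[
q(p-1)^2(q-1),\quad \bigl((p-1)(q-1)+1\bigr)q(q-1)^2,\quad p(q^2-q+1)(p-1)(q-1)^2,
\]
whose expansion and summation should collapse to the polynomial on the last line of the proposition.

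The conceptual work is really finished once these three pieces are identified; the main obstacle is purely the bookkeeping of the final polynomial identity, since several mixed monomials in $p$ and $q$ cancel across the three cross-terms before the stated closed form appears. I would perform the expansion by grouping monomials $p^iq^j$ rather than by factor, and cross-check the result against a small numerical case (e.g.\ $p=2,q=3$, so $n=18$) using the SAGE implementation described in Section 4 to rule out arithmetic slips.
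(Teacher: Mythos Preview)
Your proposal is correct and follows essentially the same approach as the paper: both apply Theorem~3.3 to the divisor list $p,q,q^2,pq$, tabulate the same nonadjacency sets for the $\ell$-weighted sum, and then evaluate the three cross-terms for the edges $(p,pq),(q,q^2),(q,pq)$ before expanding into the stated polynomial. Your simplified values (e.g.\ $\varphi(q)+\varphi(q^2)+1=q^2$) and the three cross-term products $q(p-1)^2(q-1)$, $\bigl((p-1)(q-1)+1\bigr)q(q-1)^2$, $p(q^2-q+1)(p-1)(q-1)^2$ coincide exactly with the paper's intermediate expressions.
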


	\begin{proof}
		If $n=pq^2$, then the positive proper divisors of $n$ are $p,q,q^2,pq$.
		Here $d_1=p,d_2=q,d_3=q^2,d_4=pq$ and $D=4$. 
		Also $\ell=\varphi(n)+1=q(p-1)(q-1)+1$.
		Using \Cref{T2} we find that  $\mathcal{P}(\mathbb{Z}_n)$ is the $\mathcal{G}$-join of $K_{\ell},K_{\phi(p)},K_{\phi(q)},K_{\phi(pq)},K_{\phi(q^2)}$ where $\mathcal{G}$ is shown in \Cref{F2}.
		\begin{figure}[H]
			\centering
			\begin{tikzpicture}
			\node[shape=circle,draw=black] (2) at (-2,0) {$p$};  
			\node[shape=circle,draw=black] (1) at (0,0)  {$1$};   
			\node[shape=circle,draw=black] (3) at  (2,0) {$q$};  
			\node[shape=circle,draw=black] (4) at  (0,2) {$pq$};  
			
			\node[shape=circle,draw=black] (5) at  (2,2) {$q^2$};

			\draw (1) -- (2);  \draw (1) -- (5);
			\draw (1) -- (3);  
			\draw (1) -- (4);  \draw (4) -- (3);  \draw (2) -- (4);\draw (3) -- (4);  
			\draw (3) -- (5);  
			
			\end{tikzpicture}
			\caption{$\mathcal{G}$ for $\mathcal{P}(\mathbb{Z}_{pq^2})$}
			\label{F2}	
		\end{figure}
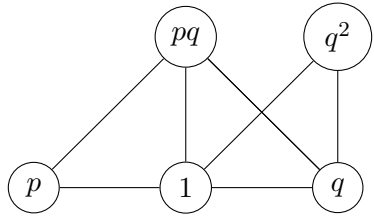
		
		Using \Cref{T3} we obtain
		\begin{align}\label{9}
		\begin{split}
		Sz(\mathcal{P}(\mathbb{Z}_{n}))&=\binom{\ell}{2}+\binom{\varphi(p)}{2}+\binom{\varphi(q)}{2}+\binom{\varphi(q^2)}{2}+\binom{\varphi(pq)}{2}\\
		&+\sum_{\substack{i=1}}^D\biggl[ \biggl(\sum_{\substack{k=1\\d_k\nsim d_i}}^D {\varphi(d_k)}+1\biggr)\biggr]\times \ell\times \varphi(d_i)+X
		\end{split}
		\end{align}
		where,
		
		\begin{flalign*}
		X=&\sum_{\substack{i,j=1\\i<j,d_i\sim d_j}}^D\biggl[ \biggl(\sum_{\substack{k=1\\d_k\sim d_i, d_k\nsim d_j}}^D {\varphi(d_k)}+1\biggr)\biggl(\sum_{\substack{k=1\\d_k\nsim d_i, d_k\sim d_j}}^D {\varphi(d_k)}+1\biggr)\biggr] {\varphi(d_i)}{\varphi(d_j)}.
		\end{flalign*}
		
		Now,
		\begin{align}\label{10}
		\begin{split}
		&\sum_{\substack{i=1}}^D\biggl[ \biggl(\sum_{\substack{k=1\\d_k\nsim d_i}}^D {\varphi(d_k)}+1\biggr)\biggr]\times \ell\times \varphi(d_i)
		\\
		&=
		\ell\biggl[\bigg(\varphi(q)+\varphi(q^2)+1\bigg)\times \varphi(p)
		+\bigg(\varphi(p)+1\bigg)\varphi(q)
		\\
		&+\biggl(\varphi
		(p)+\varphi(pq)+1\biggr)\times \varphi(q^2)+\biggl(\varphi(q^2)+1\biggr)\times \varphi(pq)\biggr]
		\\
		&=\ell\biggl[q^2(p-1)+p(q-1)+q(q-1)\bigg(p+(p-1)(q-1)\bigg)+(p-1)(q-1)\bigg(q(q-1)+1\bigg)\biggr]
		\\
		&=\ell\biggl[2pq^3 - 2pq^2 + 3pq - 2p - 2q^3 + 3q^2 - 3q + 1\biggr].
		\end{split}
		\end{align}
		Moreover, $p\sim pq, q\sim q^2, q\sim pq.$
		Thus we have,
		
		\begin{align}\label{11}
		\begin{split}
		X=&\sum_{\substack{i,j=1\\i<j,d_i\sim d_j}}^D\biggl[ \biggl(\sum_{\substack{k=1\\d_k\sim d_i, d_k\nsim d_j}}^D {\varphi(d_k)}+1\biggr)\biggl(\sum_{\substack{k=1\\d_k\nsim d_i, d_k\sim d_j}}^D {\varphi(d_k)}+1\biggr)\biggr] {\varphi(d_i)}{\varphi(d_j)}
		\\
		&=\biggl[\bigg((\varphi(q)+1)\varphi(p)\varphi(pq)\bigg)+\bigg((\varphi(pq)+1)\varphi(q)\varphi(q^2)\bigg)
		\\
		&+\bigg((\varphi(q^2)+1)(\varphi(p)+1)\varphi(q)\varphi(pq)\bigg)\biggr]
		\\
		&=q(p-1)^2(q-1)+q(q-1)^2\bigg((p-1)(q-1)+1\bigg)+p(p-1)(q-1)^2\bigg(q(q-1)+1\bigg)
		\\
		&=p^2q^4-3p^2q^3+5p^2q^2-4p^2q + p^2 - 3pq^2 + 4pq - p - q^4 + 4q^3 - 4q^2 + q.
		\end{split}
		\end{align}
		
		Substituting the values of \Cref{10,11} in \Cref{9} we have,
		
		\begin{flalign*}
		Sz(\mathcal{P}(\mathbb{Z}_n))&=\binom{\ell}{2}+\binom{\varphi(p)}{2}+\binom{\varphi(q)}{2}+\binom{\varphi(q^2)}{2}+\binom{\varphi(pq)}{2}
		\\
		&+\ell\biggl(2pq^3 - 2pq^2 + 3pq - 2p - 2q^3 + 3q^2 - 3q + 1\biggr)
		\\
		&+p^2q^4-3p^2q^3+5p^2q^2-4p^2q + p^2 - 3pq^2 + 4pq - p - q^4 + 4q^3 - 4q^2 + q.
		\end{flalign*}
		Thus the result follows.
	\end{proof}

	\subsection{Szeged Index of Power Graph of Dihedral Group}
	\label{S3}
	In this section, we calculate the Szeged index of the power graph of the dihedral group $\mathrm{D}_n$ for $n>2$.
	We establish a relation between the Szeged index of the power graph of dihedral group $\mathrm{D}_n$, and the Szeged index of the power graph of finite cyclic group $\mathbb{Z}_n$. 
	
	\begin{definition}
		\label{presentation}
		The dihedral group $\mathrm{D}_n$ of order $2n$ is given by the following presentation:
		\begin{flalign*}
		\mathrm{D}_n=\langle r,s: r^n=s^2=1, rs=sr^{-1}\rangle .
		\end{flalign*}
	\end{definition}
	
	If $n=6$, $\mathcal{P}(\mathrm {D}_n)$ takes the following form(\Cref{F3}).
	Clearly for all $n>2$, $\mathcal{P}(\mathrm {D}_n)$ contains $\mathcal{P}(\mathbb{Z}_n)$ as a subgraph.
	
	\begin{figure}[H]
		\centering
		\begin{tikzpicture}[scale=0.65]
		\node[shape=circle,draw=black] (0) at (0,4) {$0$};  
		\node[shape=circle,draw=black] (1) at (-2,2)  {$1$};   
		\node[shape=circle,draw=black] (5) at  (2,2) {$5$};  
		\node[shape=circle,draw=black] (2) at  (-2,-2) {$2$};  
		\node[shape=circle,draw=black] (4) at  (0,-4) {$4$};  
		\node[shape=circle,draw=black] (3) at  (2,-2) {$3$};

		\draw (0) -- (1); \draw (0) -- (5);	\draw (0) -- (2); \draw (0) -- (4);
		\draw (0) -- (3);

		\draw (1) -- (5);	\draw (1) -- (2); \draw (1) -- (4);
		\draw (1) -- (3); 
		
		\draw (2) -- (4); \draw (2) -- (5);	
		
		\draw (3) -- (5); \draw (4) -- (5); 
		
		\node[shape=circle,draw=black] (01) at (-2,4) {$s$};  
		\node[shape=circle,draw=black] (11) at (2,4)  {$sr^5$};   
		\node[shape=circle,draw=black] (51) at  (-1.5,7) {$sr$};  
		\node[shape=circle,draw=black] (21) at  (1.5,7) {$sr^4$};  
		\node[shape=circle,draw=black] (41) at  (-2.5,5.5) {$sr^2$};  
		\node[shape=circle,draw=black] (31) at  (2.5,5.5) {$sr^3$};  
		
		\draw (0) -- (01); \draw (0) -- (51);	\draw (0) -- (21); \draw (0) -- (41);
		\draw (0) -- (11); \draw (0) -- (31);

		\end{tikzpicture}
		\caption{$\mathcal{P}(\mathrm D_{6})$}
		\label{F3}	
	\end{figure}
	
	We now express the Szeged index of $\mathcal{P}(\mathrm{D}_n)$ in terms of the Szeged index of 
	$\mathcal{P}(\mathbb{Z}_n)$.
	
	\begin{theorem}
		\label{dg}
		If $n>2$, then the Szeged index of $\mathcal{P}(\mathrm{D}_n)$ is given as follows: 
		\begin{flalign*}
		Sz(\mathcal{P}(\mathrm{D}_n))
		&=Sz(\mathcal{P}(\mathbb{Z}_n^*))+\sum_{i=1}^D\bigg( n+1+\sum_{\substack{k=1\\d_k\nsim d_i}}^D \varphi(d_k)\bigg) \varphi(d_i)+n(2n-1+\varphi(n))+\varphi(n).
		\end{flalign*}
		Here, 
		$\mathbb{Z}_n^*=\mathbb{Z}_n\setminus \{0\}$,
		and $d_k\sim d_i$ only when $d_k\mid d_i$.
	\end{theorem}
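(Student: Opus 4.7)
The idea is to exploit the especially simple structure of $\mathcal{P}(\mathrm{D}_n)$: the rotation subgroup $\langle r\rangle\cong\mathbb{Z}_n$ induces a copy of $\mathcal{P}(\mathbb{Z}_n)$, and since $(sr^i)^2=1$ while no power of a rotation or of a different reflection can equal $sr^i$, each of the $n$ reflections is a pendant attached only to the identity $0$. Combined with the fact (from \Cref{T2}) that $0$ is universal in $\mathcal{P}(\mathbb{Z}_n)$, this gives $d(x,0)=1$ for every $x\ne 0$, while every reflection is at distance exactly $2$ from every non-identity vertex other than itself.

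My plan is to partition $E(\mathcal{P}(\mathrm{D}_n))$ into four classes: (i) edges entirely within $\mathcal{P}(\mathbb{Z}_n^*)$; (ii) edges $0b$ with $b$ a non-identity non-generator rotation, i.e.\ $b\in K_{\varphi(d_i)}$ for some proper divisor $d_i$; (iii) edges $0b$ with $b$ a generator of $\mathbb{Z}_n$; and (iv) the $n$ edges from $0$ to the reflections. For any class-(i) edge $ab$, both $0$ and every reflection are equidistant from $a$ and from $b$, so they contribute nothing to $n_1$ or $n_2$; hence $n_j(ab\mid\mathcal{P}(\mathrm{D}_n))=n_j(ab\mid\mathcal{P}(\mathbb{Z}_n^*))$ for $j=1,2$, and class (i) contributes exactly $Sz(\mathcal{P}(\mathbb{Z}_n^*))$, the first summand.

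Next, for every remaining edge $0b$, universality of $0$ forces $\{x:d(x,b)<d(x,0)\}=\{b\}$, so $n_2(0b)=1$, and $n_1(0b)$ is simply $1$ (for $x=0$) plus the count of vertices non-adjacent to $b$. For class (ii) with $b\in K_{\varphi(d_i)}$, the non-adjacent set consists of the $n$ reflections together with the rotations in blocks $K_{\varphi(d_k)}$ with $d_k\nsim d_i$, giving $n_1(0b)=n+1+\sum_{d_k\nsim d_i}\varphi(d_k)$; summing over $b$ yields the second summand. For class (iii), a generator $b$ is adjacent to every non-zero rotation, so only the $n$ reflections are non-adjacent and $n_1(0b)=n+1$, contributing $(n+1)\varphi(n)$. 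For class (iv) the endpoint $sr^i$ is non-adjacent to every vertex except $0$ and itself, so $n_1(0\cdot sr^i)=1+(n-1)+(n-1)=2n-1$, contributing $n(2n-1)$. Adding classes (iii) and (iv) rearranges into $n(2n-1+\varphi(n))+\varphi(n)$, matching the final two summands.

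The hard part will be purely bookkeeping: one must keep the generators block of size $\varphi(n)$ (which is \emph{not} among the blocks indexed by the proper divisors $d_1,\dots,d_D$) rigorously separate from the sum $\sum_{i=1}^D$, so that its contribution $(n+1)\varphi(n)$ is correctly absorbed into the tail $n(2n-1+\varphi(n))+\varphi(n)$ rather than mistakenly merged with the second summand. Once the case split and the uniform distance-$2$ behaviour of the reflections are pinned down, every count is elementary.
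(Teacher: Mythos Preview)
Your proposal is correct and follows essentially the same approach as the paper: the paper likewise partitions the edge set into edges inside $\mathcal{P}(\mathbb{Z}_n^*)$, edges from $0$ to the remaining vertices of $\mathbb{Z}_n$ (split internally into the generator and non-generator cases exactly as you do in (ii)/(iii)), and the $n$ pendant edges to the reflections, and uses the same equidistance observations to reduce each piece. Your explicit warning about keeping the $\varphi(n)$ generator block out of the $\sum_{i=1}^{D}$ sum is precisely the point where the paper's own exposition is a bit loose, so you have in fact handled that step more cleanly.
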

	
	\begin{proof}

		We shall denote $\mathbb{Z}_n^*=\mathbb{Z}_n\setminus \{0\}$.
		We can obtain $\mathcal{P}(\mathbb{Z}_n^*)$ from $\mathcal{P}(\mathbb{Z}_n)$ by deleting the vertex $0$ and all edges incident on it from $\mathcal{P}(\mathbb{Z}_n)$.
		Now,
		\begin{align}\label{D}
		\begin{split}
		Sz(\mathcal{P}(\mathrm{D}_n))&=\sum_{e(=ab)\in E(\mathcal{P}(\mathrm{D}_n))} n(ab|\mathcal{P}(\mathrm{D}_n))
		\\
		&=\sum_{e(=ab)\in E(\mathcal{P}(\mathbb{Z}_n^*))} n(ab|\mathcal{P}(\mathrm{D}_n))
		+\sum_{e\in \tau} n(ab|\mathcal{P}(\mathrm{D}_n))+\sum_{e\in \sigma} n(ab|\mathcal{P}(\mathrm{D}_n))
		\end{split}
		\end{align}
		where $\tau=\{(0,t): t\in V(\mathcal{P}(\mathbb{Z}_n))\}\subseteq E(\mathcal{P}(\mathbb{Z}_n))$ and 	$\sigma=\{(0,sr^i): 0\le i\le n-1\}\subseteq E(\mathcal{P}(\mathrm{D}_n))$.
		
		Now, we shall evaluate each term of the summation given in \Cref{D} one by one in what follows.
		\begin{flalign*}
		\sum_{e(=ab)\in E(\mathcal{P}(\mathbb{Z}_n^*))} n(ab|\mathcal{P}(\mathrm{D}_n))&=
		\sum_{e(=ab)\in E(\mathcal{P}(\mathbb{Z}_n^*))} n_1(ab|\mathcal{P}(\mathrm{D}_n))n_2(ab|\mathcal{P}(\mathrm{D}_n))
		\end{flalign*}
		
		We observe that the vertex $0$ is adjacent to every other vertex of 
		$\mathcal{P}(\mathrm{D}_n)$. Moreover, every vertex of $\mathcal{P}(\mathbb{Z}_n^*)$ has a distance $2$ from all the vertices in the set $\{sr^i: 0\le i\le n-1\}$.
		
		Thus, we have,
		\begin{align}\label{12}
		\begin{split}
		n_1(ab|\mathcal{P}(\mathrm{D}_n))&=\lvert\{x\in V(\mathcal{P}(\mathrm{D}_n)) :d(x,a)<d(x,b)\}\rvert
		\\
		&=\lvert\{x\in V(\mathcal{P}(\mathbb{Z}_n^*)) :d(x,a)<d(x,b)\}\rvert
		\\
		&=n_1(ab|\mathcal{P}(\mathbb{Z}_n^*))
		\end{split}
		\end{align}
		
		Similarly, \begin{flalign}
		\label{13}
		n_2(ab|\mathcal{P}(\mathrm{D}_n))&=n_2(ab|\mathcal{P}(\mathbb{Z}_n^*))
		\end{flalign}
		
		Using \Cref{12,13} we have,
		\begin{flalign}\label{14}
		n(ab|\mathcal{P}(\mathrm{D}_n))&=n(ab|\mathcal{P}(\mathbb{Z}_n^*))
		\end{flalign}
		
		Again,	
		\begin{flalign*}
		\sum_{e\in \tau} n(ab|\mathcal{P}(\mathrm{D}_n))
		&=\sum_{e\in \tau}n_1(ab|\mathcal{P}(\mathrm{D}_n))\sum_{e\in \tau}n_2(ab|\mathcal{P}(\mathrm{D}_n))	
		\end{flalign*}
		
		Assume that $e=(a,b)\in \tau$. Then
		\begin{flalign*}
		n_1(ab|\mathcal{P}(\mathrm{D}_n))&=\lvert\{x\in \mathcal{P}(\mathrm{D}_n):d(x,a)<d(x,b)\}\rvert
		\\
		&=\lvert\{x\in \mathcal{P}(\mathrm{D}_n):d(x,0)<d(x,t)\}\rvert \text{ where } t\in V(\mathcal{P}(\mathbb{Z}_n)).
		\end{flalign*}

		We consider the following two mutually exclusive and exhaustive cases:
		\begin{itemize}
			\item [Case 1:] $t$ is a generator of $\mathbb{Z}_n$, i.e. $\gcd(t,n)=1$.
			
			In this case $x$ can be $0$, or $x$ can be one of the elements of the set $\{sr^i:0\le i\le n-1\}$.
			Thus, we get $n+1$ choices for $x$.
			\item [Case 2:] $t$ is not  a generator of $\mathbb{Z}_n$, i.e. $\gcd(t,n)\neq 1$.
			
			Suppose $x\in K_{\varphi(d_i)}$ for some positive proper divisor $d_i$ of $n$ where $1\le i\le D$.
			Then, clearly  $x$ can be $0$, or $x$ can be one of the elements of the set $\{sr^i:0\le i\le n-1\}$, or $x$ must be in those $K_{\varphi(d_k)}$ such that $d_i\nsim d_k$ where $1\le k\le D$.
			Thus, we have 
			$n+1+\sum_{\substack{k=1\\d_k\nsim d_i}}^D \varphi(d_k)$ $\text{ choices for } x.$

		\end{itemize}
		We further note that $n_2(ab|\mathcal{P}(\mathrm{D}_n))=1$ for all $(a,b)\in \tau$.
		Thus we obtain, 
		
		\begin{flalign}\label{15}
		\sum_{e\in \tau} n(ab|\mathcal{P}(\mathrm{D}_n))&=\sum_{i=1}^D\bigg( n+1+\sum_{\substack{k=1\\d_k\nsim d_i}}^D \varphi(d_k)\bigg) \varphi(d_i).
		\end{flalign}

		Again,
		\begin{flalign*}
		\sum_{e\in \sigma} n(ab|\mathcal{P}(\mathrm{D}_n))&=\sum_{e\in \sigma} n_1(ab|\mathcal{P}(\mathrm{D}_n)) n_2(ab|\mathcal{P}(\mathrm{D}_n))
		\end{flalign*}
		
		We note that
		\begin{align*}
		\begin{split}
		n_1(ab|\mathcal{P}(\mathrm{D}_n))&=\lvert\{x:d(x,a)<d(x,b)\}\rvert
		\\
		&=\lvert\{x:d(x,0)<d(x,sr^i)\}\rvert
		\\
		&=\lvert(\mathrm{D}_n\setminus\{sr^i\})\rvert
		\\
		&=2n-1.
		\end{split}
		\end{align*}
		
		We further note that $n_2(ab|\mathcal{P}(\mathrm{D}_n))=1$ for all $(a,b)\in \sigma$.
		
		Thus we have
		\begin{flalign}\label{21}
		\sum_{e\in \sigma} n(ab|\mathcal{P}(\mathrm{D}_n))=\sum_{i=1}^n 2n-1=n(2n-1).
		\end{flalign}
		
		Substituting the values obtained from  \Cref{14,15,21} in \Cref{D} we have,
		
		\begin{flalign*}
		Sz(\mathcal{P}(\mathrm{D}_n))&=\sum_{e(=ab)\in E(\mathcal{P}(\mathbb{Z}_n^*))} n(ab|\mathcal{P}(\mathrm{D}_n))
		+\sum_{e\in \tau} n(ab|\mathcal{P}(\mathrm{D}_n))+\sum_{e\in \sigma} n(ab|\mathcal{P}(\mathrm{D}_n))
		\\
		&=Sz(\mathcal{P}(\mathbb{Z}_n^*))+(n+1)\varphi(n)+\sum_{i=1}^D\bigg( n+1+\sum_{\substack{k=1\\d_k\nsim d_i}}^D \varphi(d_k)\bigg) \varphi(d_i)+n(2n-1)
		\\
		&=Sz(\mathcal{P}(\mathbb{Z}_n^*))+\sum_{i=1}^D\bigg( n+1+\sum_{\substack{k=1\\d_k\nsim d_i}}^D \varphi(d_k)\bigg) \varphi(d_i)+n(2n-1+\varphi(n))+\varphi(n).
		\end{flalign*}
		
		Thus the result follows.
	\end{proof}
	
	We end this section with the following result.
	\begin{proposition}
		If $n=pq$ where $p,q$ are distinct primes with $p<q$, then the Szeged index of $Sz(\mathcal{P}(\mathrm{D}_n))$ is given as follows:
		\begin{flalign*}
		Sz(\mathcal{P}(\mathrm{D}_n))&=Sz(\mathcal{P}(\mathbb{Z}_n^*))+p(q^2-1)+q(p^2-1)+pq(3pq-p-q)+(p-1)(q-1).
		\end{flalign*}
		
	\end{proposition}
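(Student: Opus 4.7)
The plan is to simply specialize Theorem \ref{dg} to the case $n=pq$ and verify that each piece collapses to the claimed form. Since $n=pq$ has exactly two positive proper divisors, namely $d_1=p$ and $d_2=q$, we have $D=2$. The defining adjacency on the quotient graph $\mathcal{G}$ (i.e. $d_k\sim d_i$ iff $d_k\mid d_i$) gives $d_1\nsim d_2$ because $p\nmid q$, so for each index $i\in\{1,2\}$ the ``non-adjacent'' index set is simply the other one.

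Next I would compute the middle sum in Theorem \ref{dg} explicitly. For $i=1$, the inner sum reduces to $\varphi(q)=q-1$, and $\varphi(d_1)=p-1$, so the contribution is $(pq+1+(q-1))(p-1)=(pq+q)(p-1)=q(p^2-1)$. Symmetrically, for $i=2$ we obtain $(pq+1+(p-1))(q-1)=p(q^2-1)$. Summing gives the two leading corrective terms $p(q^2-1)+q(p^2-1)$.

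For the third summand $n(2n-1+\varphi(n))$, substitute $\varphi(pq)=(p-1)(q-1)=pq-p-q+1$ to get
\begin{flalign*}
pq\bigl(2pq-1+pq-p-q+1\bigr)=pq(3pq-p-q),
\end{flalign*}
and the remaining $\varphi(n)=(p-1)(q-1)$ is already in the desired form. Adding these four contributions to $Sz(\mathcal{P}(\mathbb{Z}_n^*))$ yields precisely the claimed identity.

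The proof is therefore a direct, routine specialization of Theorem \ref{dg}; the only thing to be careful about is the adjacency convention in $\mathcal{G}$ (one must keep in mind that the vertex labelled $1$ of $\mathcal{G}$ was already absorbed into the $S$-set used to build the power graph, so when restricting to $\mathbb{Z}_n^*$ the enumeration of $d_k\nsim d_i$ runs only over the proper divisors $p$ and $q$). There is no genuine obstacle, only careful bookkeeping of the Euler totient values and the divisibility relation between $p$ and $q$.
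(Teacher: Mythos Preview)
Your proposal is correct and follows essentially the same route as the paper: both specialize Theorem~\ref{dg} to $n=pq$, use $D=2$ with $d_1=p$, $d_2=q$, $d_1\nsim d_2$, and then simplify each of the four summands to reach the stated expression. Your arithmetic and the paper's match term for term.
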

	
	\begin{proof}
		Since $n=pq$, the only positive proper divisors of $n$ are $p$ and $q$. Thus $D=2$, $d_1=p$, and $d_2=q$.
		Moreover, since $p\nmid q$, so $d_1\nsim d_2$.
		Hence, using \Cref{dg}, we have
		
		\begin{flalign*}
		Sz(\mathcal{P}(\mathrm{D}_n)){}=&Sz(\mathcal{P}(\mathbb{Z}_n^*))+\sum_{i=1}^2\bigg( n+1+\sum_{\substack{k=1\\d_k\nsim d_i}}^2 \varphi(d_k)\bigg) \varphi(d_i)+n(2n-1+\varphi(n))+\varphi(n)
		\\
		=&Sz(\mathcal{P}(\mathbb{Z}_n^*))+( n+1+ \varphi(d_1)) +( n+1+ \varphi(d_2))\varphi(d_1)
		\\
		&+n(2n-1+\varphi(n))+\varphi(n)
		\\
		=&Sz(\mathcal{P}(\mathbb{Z}_n^*))+(pq+1+(p-1))(q-1)+(pq+1+(q-1))(p-1)
		\\
		&+pq(3pq-p-q)+(p-1)(q-1)
		\\
		=&Sz(\mathcal{P}(\mathbb{Z}_n^*))+p(q^2-1)+q(p^2-1)+pq(3pq-p-q)+(p-1)(q-1).
		\end{flalign*}
		
	\end{proof}

	\section{SAGE CODE}
	\label{S4}
	In this section, we provide the corresponding SAGE codes for calculating the Szeged index of $\mathcal{P}(\mathbb{Z}_n)$ and $\mathcal{P}(\mathrm{D}_n)$.
	On providing a given value of $n$ in the codes provided below,  Code \ref{1F} gives a plot of $\mathcal{P}(\mathbb{Z}_n)$, and  Code \ref{2F} gives the Szeged index of $\mathcal{P}(\mathbb{Z}_n)$.
	Moreover, Code \ref{3F} gives a plot of $\mathcal{P}(\mathrm{D}_n)$, and  Code \ref{4F} gives the Szeged index of $\mathcal{P}(\mathrm{D}_n)$.
	
	\vspace{20mm}
	\begin{lstlisting}[caption={Plot of $\mathcal{P}(\mathbb{Z}_n).$},label={1F},language=Python]
	G=Graph()
	E=[]
	n=21
	for i in range (n):
	for j in range (n):
	if(i!=j):
	for k in range(n):
	if(((k*i)%n==j) or ((k*j)%n==i)):
	E.append((i,j))
	G.add_edges(E)
	G.plot()
	\end{lstlisting}

	\begin{lstlisting}[caption={Szeged index of  $\mathcal{P}(\mathbb{Z}_n)$.},label={2F},language=Python]
	
	G=Graph()
	E=[]
	n=21
	for i in range (n):
	for j in range (n):
	if(i!=j):
	for k in range(n):
	if(((k*i)%n==j) or ((k*j)%n==i)):
	E.append((i,j))
	G.add_edges(E)
	S=G.szeged_index()
	print("Szeged_index_of_power_graph_of_Zn=",S)
	\end{lstlisting}
	
	\begin{lstlisting}[caption={Plot of $\mathcal{P}(\mathrm{D}_n).$},label={3F},language=Python]
	
	G=Graph()
	E=[]
	n=10
	for i in range (n):
	for j in range (n):
	if(i!=j):
	for k in range(n):
	if(((k*i)%n==j) or ((k*j)%n==i)):
	E.append((i,j))
	G.add_edges(E)
	for i in range(n,2*n):
	G.add_vertex(i)
	G.add_edge([0,i])
	G.plot()
	\end{lstlisting}
	
	\begin{lstlisting}[caption={Szeged index of $\mathcal{P}(\mathrm{D}_n).$},label={4F},language=Python]
	
	G=Graph()
	E=[]
	n=10
	for i in range (n):
	for j in range (n):
	if(i!=j):
	for k in range(n):
	if(((k*i)%n==j) or ((k*j)%n==i)):
	E.append((i,j))
	G.add_edges(E)
	for i in range(n,2*n):
	G.add_vertex(i)
	G.add_edge([0,i])
	S=G.szeged_index()
	print("Szeged index of power graph of dihedral group=",S)
	\end{lstlisting}

	\section{Conclusion}
	In this paper, we have studied the Szeged index of the power graph of finite cyclic and dihedral groups. We first obtained a formula for the Szeged index of the generalized join of graphs in terms of the constituent graphs. 
	Using it, we first determined the Szeged index of the power graph of finite cyclic groups.
	We further obtain a relation between the Szeged index of the power graph of dihedral groups and the Szeged index of the power graph of finite cyclic groups.
	Finally, we provide  SAGE codes for evaluating the  Szeged indices of the power graph of finite cyclic groups and dihedral groups.
	
	\section{Conflict of Interest}
	The author states that there is no conflict of interest.
	
\bigskip

\end{document}